\newtheorem{theorem}{Theorem}
\newtheorem{remark}{Remark}
\newtheorem{lemma}{Lemma}
\newtheorem{conjecture}{Conjecture}
\newtheorem{case}{Case}
\newcommand{\ls}{\mathcal{L}}
\newcommand{\As}{\mathcal{A}}
\newcommand{\Ai}[1][i]{A_{#1}}
\newcommand{\Cs}{C}
\newcommand{\Ci}[1][i]{c_{#1}}
\newcommand{\hgt}{H}
\newcommand{\m}{\mathcal}
\newcommand{\abs}[1]{\left\lvert{#1}\right\rvert}
\newcommand{\mc}{\mathcal}
\newcounter{claim}
\newenvironment{proof}[1][]%
 {\noindent {\setcounter{claim}{0}\sc proof ---
   }{#1}{}}{\hfill$\Box$\vspace{2ex}}
\newcommand{\sm}{\setminus} 
\newcommand{\ov}{\overline}
\title{De Bruijn-Erd\H{o}s type theorems for graphs and posets}
\author{
Pierre Aboulker\thanks{Universidad Andres Bello, Santiago, Chile, e-mail: pierreaboulker@gmail.com. Research supported by Fondecyt Postdoctoral grant 3150314 of CONICYT Chile.}
  \and 
Guillaume Lagarde\thanks{ENS Lyon, Lyon, France, e-mail: guillaume.lagarde@gmail.com} 
 \and 
David Malec\thanks{University of Maryland, College Park, United States, e-mail: dmalec@cs.wisc.edu}
 \and
Abhishek Methuku\thanks{Central European University, Budapest, Hungary, e-mail: abhishekmethuku@gmail.com} \and 
Casey Tompkins \thanks{Central European University, Budapest, Hungary, e-mail: ctompkins496@gmail.com}
}
\begin{document}

\maketitle

\begin{abstract} 
A classical theorem of De~Bruijn and Erd\H{o}s asserts that any noncollinear set of $n$ points in the plane determines at least $n$ distinct lines. 
We prove that an analogue of this theorem holds for graphs. Restricting our attention to comparability graphs, we obtain a version of the De~Bruijn-Erd\H{o}s theorem for partially ordered sets (posets). Moreover, in this case, we have an improved bound on the number of lines depending on the height of the poset. The extremal configurations are also determined. 
\end{abstract}

\section{Introduction}

The starting point of this paper is a classical result of De~Bruijn and Erd\H{o}s in combinatorial geometry.
A set of  $n$ points in the plane, not all on a line, is called a \textit{near-pencil} if exactly $n-1$ of the points are collinear. 

\begin{theorem} 
\label{dbe}
Every noncollinear set of $n$ points in the plane determines at least $n$ lines. Moreover, equality occurs if and only if the configuration is a near-pencil.
\end{theorem}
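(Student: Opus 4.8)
The plan is to prove the theorem by induction on $n$, using the Sylvester--Gallai theorem as the one external input: every finite noncollinear point set in the plane contains an \emph{ordinary line}, i.e.\ a line incident to exactly two of its points. The ``easy'' half of the equality statement needs no induction: if $P$ is a near-pencil with $n-1$ points on a line $m$ and one further point $r$, then any two points of $P$ either both lie on $m$ (so their line is $m$) or include $r$, so the lines determined by $P$ are exactly $m$ together with the $n-1$ lines joining $r$ to the points of $m$; these $n$ lines are pairwise distinct since $r\notin m$. It thus remains to prove that a noncollinear $n$-point set determines at least $n$ lines, with equality only for near-pencils.

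For the induction, the base case $n=3$ is a triangle, which determines exactly $3$ lines and is (trivially) a near-pencil. For $n\ge 4$, let $P$ be noncollinear with $|P|=n$; by Sylvester--Gallai pick an ordinary line $\ell$ through exactly two points $a,b\in P$, and set $P':=P\sm\{a\}$. Every line determined by $P'$ is still determined by $P$ (it contains at least two points of $P'\subseteq P$), whereas $\ell$ is determined by $P$ but not by $P'$, because $\ell\cap P=\{a,b\}$. If $P'$ is collinear, lying on a line $m$, then $a\notin m$ and, exactly as above, $P$ determines precisely $m$ together with the $n-1$ lines through $a$ and the points of $m$; so $P$ has exactly $n$ lines and is a near-pencil. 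If $P'$ is noncollinear, the induction hypothesis gives at least $n-1$ lines for $P'$, and adjoining $\ell$ gives at least $n$ lines for $P$. This settles the inequality.

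The delicate part is equality when $P'$ is noncollinear. Suppose $P$ has exactly $n$ lines. Since every line of $P'$ is a line of $P$ while $\ell$ is not, $P'$ has exactly $n-1$ lines, hence by induction is a near-pencil with axis $m$ covering all points of $P'$ except one, say $r$. To locate the re-added point $a$, note that the lines through $a$ partition $P'$ into classes, two points of $P'$ lying in the same class iff they are collinear with $a$: a class of size at least $2$ is an old line of $P'$ that has absorbed $a$, while a singleton class $\{c\}$ contributes the new line $ac$. As $\ell=ab$ is the unique new line of $P$, the class $\{b\}$ is the only singleton. A short case analysis --- according to whether $b$ equals $r$ or lies on the axis $m$, and using that two distinct points of $m$ on a common line through $a$ would force $a\in m$ --- shows that in every case $P$ is again a near-pencil. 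The main obstacle throughout is exactly this last step: one must rule out the configurations where several lines of $P$ accidentally coincide, and the smallest case ($n=4$) is cleanest to handle by direct inspection.

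For completeness I note the alternative of following De~Bruijn and Erd\H{o}s's original argument, which avoids Sylvester--Gallai and works combinatorially from the inequality $\deg(p)\ge|L|$ valid whenever a point $p$ is not on a line $L$; but the Sylvester--Gallai induction seems the cleanest route that also pins down the extremal configurations.
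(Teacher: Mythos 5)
The paper does not prove Theorem~\ref{dbe} at all: it is quoted as a classical result, with the proof attributed to Erd\H{o}s (via the Sylvester--Gallai theorem) and to De~Bruijn and Erd\H{o}s (via a more general combinatorial statement). Your proposal is a correct rendition of exactly the first of these routes --- induction on $n$, deleting one point of an ordinary line supplied by Sylvester--Gallai --- and the compressed equality analysis at the end is sound, since the observation that two points of the axis $m$ lying on a common line through $a$ forces $a\in m$ does reduce everything to $a\in m$ or to the $n=4$ configuration. So there is no gap; you have simply supplied the standard proof that the paper cites rather than reproduces.
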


Erd\H{os} \cite{E43} showed that this result is a consequence of the Sylvester-Gallai theorem which asserts that every noncollinear set of $n$ points in the plane determines a line containing precisely two points. 
Later, De Bruijn and Erd\H{o}s~\cite{dbe} proved a more general combinatorial result which implies Theorem~\ref{dbe}.
Coxeter \cite{Cox} showed that the Sylvester-Gallai theorem holds in a more basic setting known as  \textit{ordered geometry}. Here, the notions of distance and angle are not used and, instead, a ternary relation of \textit{betweenness} is employed.
We write $[abc]$ for the statement that $b$ lies between $a$ and $c$.  Using this notation, a \textit{line} can be defined in a simple way. For any two distinct points $a$ and $b$, the line $\ov{ab}$ is defined as
\begin{equation} \label{definitionofline}
\ov{ab}=\{a,b\} \cup \{ x:[xab]\text{ or } [axb] \text{ or } [abx]\}
\end{equation}



Therefore, any set of points with a betweenness relation defined on them determines a family of lines. For example, in an arbitrary metric space $(V,d)$, Menger \cite{Menger} defined betweenness on $V$ in the following natural way:
\[
[axb]  \Leftrightarrow d(a,x) + d(x,b) = d(a,b).
\]
Using \eqref{definitionofline}, the line $\ov{ab}$ can be defined for any two distinct points $a$ and $b$ in $V$. Observe that this definition of a line generalizes the classical notion of a line in Euclidean space to any metric space. These lines may have strange properties: two lines might have more than one common point, and it is even possible for a line to be a proper subset of another line. A line is called \emph{universal} if it contains all points in $V$. Chen and Chv\'atal~\cite{CC} proposed the following conjecture, which, if true, would give a vast generalization of Theorem~\ref{dbe}.

\begin{conjecture}[Chen-Chv\'atal ~\cite{CC}] \label{conjCC}
Any finite metric space on $n$ points either induces at least $n$ distinct  lines or contains a universal line.
\end{conjecture}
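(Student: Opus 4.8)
The plan is to prove the contrapositive: assuming a finite metric space $(V,d)$ on $n$ points has no universal line, I would show it determines at least $n$ distinct lines. The first step is to try to transplant the classical double-counting proof of the combinatorial De~Bruijn--Erd\H{o}s theorem into this setting. For each unordered pair $\{a,b\}$ the line $\overline{ab}$ is uniquely determined, so there is a well-defined incidence structure of points and lines. The engine of the combinatorial proof is the inequality $r_p \ge |\ell|$ for every point $p \notin \ell$, where $r_p$ is the number of lines through $p$: it holds because the lines $\overline{pq}$ with $q \in \ell$ are pairwise distinct. Combined with a double count over the non-incident point--line pairs and the hypothesis that every line is nonuniversal (so $|\ell| \le n-1$), this would yield the desired bound on the number $m$ of lines, namely $m \ge n$, exactly as in the planar case of Theorem~\ref{dbe}.

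The obstruction is that, as the excerpt already warns, the lines of a metric space do \emph{not} form a linear space: two distinct lines may share two or more points, and one line may even be a proper subset of another. Consequently the lines $\overline{pq}$ for $q$ ranging over $\ell$ need not be distinct, since $\overline{pq}=\overline{pq'}$ is possible for $q \ne q'$, and so the crucial inequality $r_p \ge |\ell|$ can fail and the clean counting collapses. My first real task would therefore be to quantify this failure: for each line $\ell$ and point $p \notin \ell$, set $f(p,\ell)=\bigl|\{\overline{pq} : q \in \ell\}\bigr|$ and seek a lower bound for $f(p,\ell)$ in terms of the betweenness structure of $\ell$ (for instance in terms of its \emph{extreme} points, or of the number of maximal betweenness chains it contains). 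A robust, weighted version of the double count that tolerates these coincidences, rather than the naive inequality, is what one would hope to push through.

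The step I expect to be the main obstacle is establishing any metric analogue of the Sylvester--Gallai phenomenon that Erd\H{o}s used to deduce Theorem~\ref{dbe} in the plane: a guarantee that some line is \emph{thin}, i.e. contains few points or passes through only boundedly many lines. Such a guarantee would unlock an inductive argument, deleting a thin point $p$, applying the bound to $V \setminus \{p\}$, and controlling how many lines are destroyed. But no Sylvester--Gallai principle is known for arbitrary metric betweenness, and the thick-intersection behaviour can be genuinely adversarial; this is precisely why the full conjecture has resisted proof. I would therefore expect a complete argument to require either a weighting scheme strong enough to compensate for every coincidence among the lines $\overline{pq}$, or a reduction to metric spaces realizing only boundedly many distinct distance values, where the line structure is rigid enough for the linear-space machinery to be repaired and the combinatorial De~Bruijn--Erd\H{o}s bound recovered.
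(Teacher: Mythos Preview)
The statement you are attempting to prove is Conjecture~\ref{conjCC}, which the paper explicitly presents as \emph{open}: immediately after stating it, the authors write that ``the conjecture has been proved in several special cases \ldots\ it is still wide open,'' and record that the best known lower bound for general metric spaces is only $(1/\sqrt{2}+o(1))\sqrt{n}$. There is therefore no proof in the paper for your proposal to be compared against; the paper's contribution is to establish the analogue of the conjecture for posets (Theorem~\ref{lineshofP}) and for graphs (Theorem~\ref{mainGraph}), not for arbitrary finite metric spaces.

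Your proposal is not a proof but a candid diagnosis of why the classical double-counting argument breaks down, and on that score your analysis is accurate. You correctly isolate the failure point: the inequality $r_p \ge |\ell|$ relies on the lines $\overline{pq}$ for $q \in \ell$ being pairwise distinct, which in turn requires that two points determine a unique line---exactly the linear-space axiom that metric lines violate. You then sketch two possible repairs (a weighted count tolerant of coincidences, or a reduction to bounded-value metrics) but do not carry either through, and you explicitly concede that ``no Sylvester--Gallai principle is known for arbitrary metric betweenness.'' That concession is the genuine gap: neither of your suggested repairs is known to work, and absent one of them your outline does not yield the bound $m \ge n$. In short, you have correctly identified the obstruction that keeps Conjecture~\ref{conjCC} open, but you have not overcome it---nor does the paper claim to.
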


Although the conjecture has been proved in several special cases~\cite{dh, chordal,ChenGraph,12Metric,ChvatalMetric,L1}, it is still wide open. The best known lower bound on the number of lines in a general finite metric space with no universal line is $(1/\sqrt 2 + o(1))\sqrt{n}$~\cite{metricSpace}. 
\medskip

Recently, Chen and Chv\'atal \cite{CC} generalized the notion of lines in metric spaces to lines in hypergraphs.  A \emph{hypergraph} is an ordered pair $(V, \mathcal{E})$ such that $V$ is a set of elements called the \emph{vertices} and $ \mathcal{E}$ is a family of subsets of $V$ called the \emph{edges}. A hypergraph is \emph{k-uniform} if each of its edges consist of $k$ vertices. They observed that given a metric space $(V,d)$, one can associate a hypergraph $H(d) = (V, \mathcal E)$ with $\mathcal E := \{ \{a, b, c\} : [abc] \text{ in }(V,d)\}$. If the line $\ov{ab}$ in the $3$-uniform hypergraph is defined as
\begin{equation*} \label{hypeLine}
\ov{ab}=\{a,b\} \cup \{x: \{a,b,x\} \in \mathcal{E}  \},
\end{equation*}
then the metric space $(V,d)$ and the hypergraph $(V, \mathcal E)$ determine the same set of lines.
 
They proved that there is an infinite family of $3$-uniform hypergraphs inducing only $c^{\sqrt{\log_2{n}}}$ distinct lines (where $n$ is the number of vertices and $c$ is a constant). This means that there are infinitely many $3$-uniform hypergraphs for which the analogue of Theorem~\ref{dbe} does not hold. However, analogues of Theorem~\ref{dbe} have been shown to hold for some special families of $3$-uniform hypergraphs in \cite{BBC}. The best known lower bound on the number of lines in a $3$-uniform hypergraph with no universal line is $(2-o(1)) \log_2{n}$ \cite{hypergraphA}.
\medskip

Following the lead of these previous works, we obtain an analogue of De Bruijn-Erd\H{o}s theorem for posets. Let $P = (X, \prec)$ be a finite poset with the order relation $\prec$ defined on the set $X$.  The size of a maximum chain in $P$ is called the \textit{height} of $P$ and is denoted $h(P)$.

As in the metric space case, a poset $P$ induces a natural betweenness relation: 
\begin{displaymath}
[abc]  \Leftrightarrow a\prec b \prec c \text{ or } c \prec b \prec a.
\end{displaymath}
Therefore, we can again define lines in posets using \eqref{definitionofline}. Observe that, if $a$ is incomparable to $b$, then the line $\ov{ab}=\{a,b\}$ and, if $a$ is comparable to $b$, then
\begin{displaymath}
 \overline{ab} = \{a,b\} \cup \{x: \mbox{$x$ is comparable to both } a \mbox{ and } b\}.
\end{displaymath}
As before, a line is \emph{universal} if it contains every point from the ground set.  Our main result is to show that an analogue of Conjecture~\ref{conjCC}  holds for posets. In fact, we obtain a stronger bound as a function of the height of the poset. 

\begin{theorem}
\label{lineshofP}
If $P$ is a poset on $n$ vertices with no universal line, and $h(P) \ge 2$, then $P$ induces at least
\begin{equation}
\label{eq:fnofH}
h(P) \binom{\lfloor n/h(P) \rfloor}{2} + \lfloor n/h(P) \rfloor (n \bmod {h(P)}) + h(P)
\end{equation}
distinct lines.
\end{theorem}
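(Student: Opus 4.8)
The plan is to pass to the comparability graph $G$ of $P$ (vertex set $X$, edges the comparable pairs) and to split the lines into two families. If $a\parallel b$ then $\overline{ab}=\{a,b\}$; call these the \emph{short} lines. If $a\prec b$ then $\overline{ab}=\{a,b\}\cup\bigl(N_G(a)\cap N_G(b)\bigr)$; call these the \emph{long} lines. Distinct non-edges of $G$ give distinct short lines, every short line is exactly the two-element set of a non-edge, and every long line $\overline{ab}$ contains the edge $ab$; hence no short line coincides with a long line, and the number of lines of $P$ equals $\bigl(\binom{n}{2}-|E(G)|\bigr)+L$, where $L$ is the number of distinct long lines. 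Note also that $P$ has a universal line if and only if $P$ has two elements each comparable to all others (i.e.\ $G$ has two vertices of degree $n-1$); this is the form in which the hypothesis will be used.

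Next I would extract the main term. Comparability graphs are perfect, so $\chi(G)=\omega(G)=h$ with $h:=h(P)$; equivalently, by Mirsky's theorem, $X$ splits into $h$ antichains $I_1,\dots,I_h$ (say $I_j$ the set of vertices at which the longest chain of $P$ has exactly $j$ elements), and one may fix a maximum chain $c_1\prec\cdots\prec c_h$ with $c_j\in I_j$. All $\binom{|I_j|}{2}$ pairs inside $I_j$ are non-edges, so these contribute exactly $\sum_j\binom{|I_j|}{2}$ short lines, and by convexity $\sum_j\binom{|I_j|}{2}\ge h\binom{\lfloor n/h\rfloor}{2}+\lfloor n/h\rfloor(n\bmod h)$, the first two summands of \eqref{eq:fnofH}. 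Writing $c^{\mathrm{cross}}$ for the number of non-edges of $G$ joining two different antichains, the total number of lines is $\sum_j\binom{|I_j|}{2}+c^{\mathrm{cross}}+L$, so it remains to prove the key inequality $c^{\mathrm{cross}}+L\ge h$.

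For the key inequality I would use non-universality of $\overline{c_1c_h}$: there is $w\notin\overline{c_1c_h}$, so $w$ is incomparable to $c_1$ or to $c_h$; after possibly reversing the order of $P$ assume $w\parallel c_h$, and let $k$ be the largest index with $w$ comparable to $c_k$ (set $k=0$ if none). If $k=0$, the lines $\overline{wc_1},\dots,\overline{wc_h}$ are $h$ distinct short lines, at most one of which lies inside an antichain (the one with $c_j$ in the antichain containing $w$), so $c^{\mathrm{cross}}\ge h-1$, and with the long line $\overline{c_1c_h}$ this gives $c^{\mathrm{cross}}+L\ge h$. If $k\ge1$ one checks $w\succ c_k$ (else $w\prec c_k\prec c_{k+1}$ forces $w$ comparable to $c_{k+1}$), so $c_1\prec\cdots\prec c_k\prec w$ is a chain; then $\{w,c_{k+1}\},\dots,\{w,c_h\}$ are $h-k$ distinct short lines with at most one inside an antichain, while $\overline{c_1c_h}$ (which omits $w$) and $\overline{c_kw}$ (which omits $c_h$) are two distinct long lines, giving $c^{\mathrm{cross}}+L\ge(h-k-1)+2=h-k+1$. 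This already settles $k\in\{0,1\}$; for $2\le k\le h-1$ one has a second maximum-length configuration $c_1\prec\cdots\prec c_k\prec w$, and here I expect the main obstacle to lie. One would either iterate the argument on the part of $P$ above level $k$, or run an induction on $h$ (or on $n$, deleting the antichain $I_1$, or a single carefully chosen vertex so that the height and the no-universal-line property are preserved), invoking non-universality at each step in the manner of the $k=0$ case to kill the degenerate possibilities — for a chain one has $L=1$ and $c^{\mathrm{cross}}=0$, so the inequality genuinely fails there, and it is precisely the absence of a universal line that rules this out. The delicate points are the bookkeeping of which newly produced lines are distinct from the intra-antichain short lines already counted, and verifying that the inductive reductions do not create a universal line.

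Combining $\sum_j\binom{|I_j|}{2}\ge h\binom{\lfloor n/h\rfloor}{2}+\lfloor n/h\rfloor(n\bmod h)$ with $c^{\mathrm{cross}}+L\ge h$ yields the bound \eqref{eq:fnofH}. Tracking the two places where equality can occur — a near-balanced antichain partition together with $c^{\mathrm{cross}}+L=h$, as realized by a maximum chain together with $n-h$ pairwise incomparable extra vertices — should pin down the extremal posets.
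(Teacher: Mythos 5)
Your setup coincides with the paper's: partition into $h$ antichains via Mirsky, count the intra-antichain incomparable pairs as distinct two-point lines, and apply the convexity bound (the paper's Lemma~\ref{estimating}) to get the first two summands of \eqref{eq:fnofH}; then the whole content of the theorem is your key inequality $c^{\mathrm{cross}}+L\ge h$, i.e.\ finding $h$ further lines not among the intra-antichain ones. But that inequality is exactly where your argument stops. Your single-witness computation gives $c^{\mathrm{cross}}+L\ge h-k+1$, which suffices only for $k\in\{0,1\}$, and for $2\le k\le h-1$ you explicitly defer to an unexecuted iteration or induction while naming the two obstacles yourself (distinctness bookkeeping against the already-counted short lines, and preservation of the no-universal-line hypothesis under deletion). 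Neither obstacle is routine: deleting an antichain or a vertex can create a universal line, so a clean induction on $h$ or $n$ is not available, and this is a genuine gap rather than a detail.

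What the paper does instead is a direct iterative process that never deletes anything. It maintains an interval $[b_k,t_k]$ of the fixed maximum chain, starting at $[1,h]$, and at each step uses non-universality of the \emph{current} line $\overline{c_{b_k}c_{t_k}}$ (not just of $\overline{c_1c_h}$) to get a fresh witness $s_k$ outside it; depending on whether $s_k$ is incomparable to the bottom, the top, or both ends, it either shrinks the interval from below or from above, or terminates. The interval must be two-sided precisely because later witnesses may attach to either end. Each non-terminal iteration contributes one new line induced by a comparable pair (these $K$ lines are shown pairwise distinct by a containment argument: $l_k$ contains all later interval endpoints but omits $c_{b_k}$ or $c_{t_k}$) plus $b_{k+1}-b_k+t_k-t_{k+1}-1$ new two-point lines, where the $-1$ accounts for the at most one collision with an intra-antichain line; the sum telescopes to exactly $h-K$, and together with the $K$ comparable-type lines this yields the $+h$. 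Your $k=0$ and $k\ge 1$ cases are essentially the first iteration of this process, so the approach is right, but the iteration, its termination analysis, and the collision accounting across iterations (new short lines must also avoid short lines from \emph{earlier} iterations, which the paper handles by noting those involve chain points strictly outside $[b_k,t_k]$) constitute the actual proof and are missing.
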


Observe that \eqref{eq:fnofH} is always greater than or equal to $n$ (with equality if $h(P) \geq \lfloor n/2 \rfloor$). 
Moreover, if $h(P) = O(n^s)$ for $0<s\leq 1$, the number of distinct lines in $P$ is $\Omega(n^{2-s})$.
\medskip

Our second result is a generalization from posets to graphs.   For any graph $G=(V,E)$ and vertices $a,b \in V$, we can define the line $\ov{ab}$ as
\begin{equation*}
\ov{ab}=\{a,b\} \cup \{c: abc \text{ is a triangle}\}.
\end{equation*}
Again, the line $\ov{ab}$ is universal if it contains every vertex in $V$.  We prove

\begin{theorem}\label{mainGraph}
If a graph $G$ on $n \ge 4$ vertices does not contain a universal line, then  it induces at least $n$ distinct lines, and equality occurs only if $G$ consists of a clique of size $n-1$ and a vertex that has at most one neighbor in the clique.
\end{theorem}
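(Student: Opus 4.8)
The plan is to prove this by induction on $n$, treating $n=4$ by hand and, for $n\ge 5$, deleting a carefully chosen vertex. I would start from the elementary facts that $\overline{ab}=N[a]\cap N[b]$ when $ab\in E(G)$ (with $N[\cdot]$ the closed neighbourhood) and $\overline{ab}=\{a,b\}$ when $ab\notin E(G)$, so that for $n\ge 3$ a line is universal exactly when $G$ has two universal vertices; thus the hypothesis says $G$ has at most one universal vertex. Splitting the lines into \emph{short} ones (size $2$) and \emph{long} ones (size $\ge 3$), the $m:=\binom n2-|E(G)|$ non-edges of $G$ already give $m$ distinct short lines, so if $m\ge n$ we are done (and one checks equality is then impossible for $n\ge 4$, since it would force $G$ empty and $n=3$). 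Hence the interesting case is $m\le n-1$.

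Call a vertex $v$ \emph{good} if it is not universal and $G-v$ has no universal line. I would next establish a counting lemma: a good vertex exists unless $G$ has a universal vertex $u$ and $\overline G-u$ is a perfect matching, i.e.\ $G=K_n$ minus a perfect matching on $V\setminus\{u\}$. Indeed, the number of vertices of $G-v$ that become universal after deleting $v$ is precisely the number of degree-one vertices of $\overline G$ adjacent to $v$; summing over $v$ gives the number of leaves of $\overline G$, which is at most $n$, and this forces an admissible $v$ to exist unless $\overline G$ is of the exceptional form. For that one family I would compute directly: its $m=(n-1)/2$ non-edge lines together with the $n-1$ distinct long lines $\overline{ux}=V\setminus\{x'\}$ (where $x'$ is the partner of $x$) already give more than $n$ lines.

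Given a good vertex $v$, the heart of the inductive step is the identity relating $\mathcal L(G)$ to $\mathcal L(G-v)$: the lines of $G$ not through $v$ are exactly the lines of $G-v$ except for the ``absorbed'' edge-lines—edge-lines $L$ of $G-v$ all of whose defining edges have both endpoints adjacent to $v$—which reappear in $G$ as $L\cup\{v\}$; meanwhile the lines through $v$ include those $X_v$ sets $L\cup\{v\}$ as well as the $\deg_{\overline G}(v)$ short lines $\{v,x\}$ over non-neighbours $x$ of $v$. This yields $|\mathcal L(G)|\ge \deg_{\overline G}(v)+|\mathcal L(G-v)|\ge 1+(n-1)=n$ by the inductive hypothesis (legitimate since $n-1\ge 4$), with the base case $n=4$ checked against the short list of $4$-vertex graphs. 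If moreover $|\mathcal L(G)|=n$, both inequalities are tight, so $\deg_{\overline G}(v)=1$ with unique non-neighbour $x_0$, and by the inductive characterization $G-v$ is a clique on a set $D$ of size $n-2$ plus a vertex $z$ with at most one neighbour in $D$. When $x_0=z$, this makes $D\cup\{v\}$ a clique of size $n-1$ in which $z$ has at most one neighbour, so $G$ is of the asserted form; when $x_0\in D$ one exhibits an ``extra'' line through $v$ (e.g.\ $\overline{vz}$, or $\overline{vw}$ for a suitable $w\in D\setminus\{x_0\}$) that is neither one of the $X_v$ sets $L\cup\{v\}$ nor the short line $\{v,x_0\}$, contradicting tightness.

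The step I expect to be the real obstacle is the equality analysis: carrying out the bookkeeping of exactly which lines pass through the deleted vertex $v$, cleanly enough to eliminate the case $x_0\in D$ and to confirm that no further $4$-vertex or exceptional configurations slip through. The inequality $|\mathcal L(G)|\ge n$ itself should go through smoothly once the (small) exceptional family is set aside.
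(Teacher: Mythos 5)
Your proposal is correct and follows essentially the same strategy as the paper: induction on $n$, deleting a non-universal vertex $v$ whose removal creates no universal line, injectively lifting each line $\ell$ of $G\setminus\{v\}$ to $\ell$ or $\ell\cup\{v\}$, adding the short line(s) through $v$ to reach $n$, and running the equality analysis through the inductive characterization of $G\setminus\{v\}$. The only real difference is how the degenerate case (no such $v$ exists) is dispatched: you characterize it structurally as $K_n$ minus a perfect matching on the non-universal vertices via a leaf count in $\overline{G}$ and then count lines directly, whereas the paper observes that whenever $G\setminus\{x\}$ has a universal line, the set $V\setminus\{x\}$ is itself a line of $G$, which yields $n+1$ lines in those cases without any structural analysis.
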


\begin{remark}
It may be easily seen that the theorem also holds when $n=3$, but we have an additional extremal example in this case: a graph where all pairs of vertices are non-adjacent.
\end{remark}

Observe that to any poset $P=(V, \prec)$, we can associate a graph  $G=(V,E)$ where $ab \in E$ if and only if $a\prec b$ or $b \prec a$. Such a graph is called a \emph{comparability graph}. Hence, for any three vertices $a,b,c$ of $P$, we have  $a \prec b \prec c$ if and only if $abc$ is a triangle in the corresponding comparability graph. Therefore, the graph case is a generalization of the poset case. 

Using Theorem \ref{mainGraph}, it can be easily seen that, in the case of posets, equality occurs only if the poset consists of a chain of size $n-1$ and a vertex which is comparable to at most one vertex of this chain.

The paper is organized as follows.  In Section~\ref{poset}, we prove Theorem \ref{lineshofP}  by providing an algorithm for finding lines. In Section~\ref{graph}, we prove  Theorem \ref{mainGraph} by induction.

\section{Lines in Posets}\label{poset}
We begin by introducing some notation that will be useful in the proof of Theorem \ref{lineshofP}. For any pair of elements $a, b$ in a poset $P = (X, \prec)$, we write $a \nsim b$ to indicate that the points $a$ and $b$ are not comparable (that is, neither $a \prec b$ nor $b \prec a$ hold). Let $Y \subseteq X$. We denote by $P \sm Y$  the poset on the set of points $X \sm Y$  together with $\prec$ restricted to $X \sm Y$.

In this section, we prove a lower bound on the number of lines in a poset as a function of its height. Before we proceed with the proof, we need a simple lemma.

\begin{lemma}
\label{estimating}
If $A_1,...,A_{r}$ are $r$ sets such that $\sum_{i=1}^{r} |A_i| = n$, then $\sum_{i=1}^{r}\binom{\abs{\Ai}}{2}\ge r \binom{\lfloor n/r ) \rfloor}{2} + \lfloor n/r \rfloor (n\bmod {r})$.
\end{lemma}

\begin{proof}
Observe first that if $\abs{\abs{A_i}-\abs{A_j}}\le1$ for all $i$ and $j$, then the bound holds.   Thus, it suffices to prove that if $\abs{A_i}-\abs{A_j}> 1$, then moving one point from $A_i$ to $A_j$ does not increase $\sum_{i=1}^{\hgt}\binom{\abs{\Ai}}{2}$.  Let $x \in A_i \setminus A_j$ and $A_i' = A_i \setminus \{x\}$, $A_j' = A_j \cup \{x\}$.  We have
\begin{displaymath}
\binom{\abs{A_i}}{2}+\binom{\abs{A_j}}{2} \ge \binom{\abs{A_i}-1}{2}+ \binom{\abs{A_j}+1}{2} =\binom{\abs{A_i'}}{2}+\binom{|A_j'|}{2} 
\end{displaymath}
by the convexity of $\binom{m}{k}$ in $m$, and the lemma follows.
\end{proof}

\subsection{Proof of Theorem \ref{lineshofP}}
 Let $\As$ be a maximal partition of $P$ into antichains, and let $\Cs \subseteq P$ be
  a maximal chain in $P$.  By Mirsky's Theorem, we know that $\abs{\As}=\abs{\Cs} = h(P)$. For notational convenience, from now on, let $h(P)$ be denoted by $\hgt$. Denote the elements of $\As$ and $\Cs$,
  respectively, as $\As=\{\Ai[1],\dots,\Ai[\hgt]\}$ and
  $\Cs=\{\Ci[1] \dots \Ci[\hgt]\}$ with $\Ci[1]\prec\dots\prec\Ci[\hgt]$.  
Assume, without loss of generality, that $\Ci\in\Ai$ for  $i=1, \dots, H$.  

Set  
\begin{equation*}
   \mc L_0
    :=
    \mathop{\bigcup} \limits_{i=1}^{\hgt}\{\overline{ab}:a,b\in\Ai, a \neq b\}.
  \end{equation*}
Note that all of the lines in $\mc L_0$ are induced by incomparable points and are, thus, pairwise distinct. 
By Lemma \ref{estimating}, we have
  \begin{equation*}
    \abs{\ls_{0}}=\sum_{i=1}^{\hgt}\binom{\abs{\Ai}}{2}\ge \hgt \binom{\lfloor n/\hgt) \rfloor}{2} + \lfloor n/\hgt \rfloor (n\bmod {\hgt}).
  \end{equation*}
  

 Next we use the chain $C$ to find $H$ further lines, distinct from those in $\ls_0$.  We do so
  via the following iterative process:
 
  \medskip

 Set $b_1=1$, $t_1=h$ and  $\m L_1= \emptyset$. 
 For $k=1, 2, \dots$, apply the following steps until a STOP condition is met.
\begin{enumerate}
\item[{\bf Step 1}] If $b_k=t_k$, set $\m L_k := \m L_{k-1} \cup  \{\ov{c_1c_H}\} $ and {\bf STOP}. 

Otherwise $b_k<t_k$ and there exists $s_k \notin \ov{c_{b_k}c_{t_k}}$. If $s_k$ is incomparable with both $c_{b_k}$ and $c_{t_k}$, go to {\bf Step 2a}. If $s_k$ is incomparable with  $c_{b_k}$ and comparable with $c_{t_k}$, go to {\bf Step 2b}. Finally, if $s_k$ is incomparable with $c_{t_k}$ and comparable with $c_{b_k}$, go to {\bf Step 2c}. 

\item[{\bf Step 2a}] Set $\m L_k := \m L_{k-1} \cup \{\ov{c_is_k} : b_{k}\le i \le t_k\} \cup  \{\ov{c_1c_H}\}$ and {\bf STOP}.

\item[{\bf Step 2b}] Set $b_{k+1}=max_{b_k\le i < t_k}\{i: \, c_i \nsim s_k \}+1$, 

$t_{k+1}=t_k$, 
$\m L_k:= \m L_{k-1} \cup \{\ov{c_is_k} : b_k\le i \le b_{k+1}\}$. 
Go to {\bf Step 1}.

Observe that, in this case, we have that $b_k<b_{k+1}\le t_k$, $s_k \prec c_{b_{k+1}}$ and, for $j=b_k,  \dots, b_{k+1}-1$, we have $s_k \nsim c_j$.

\item[{\bf Step 2c}] Set $t_{k+1}=min_{b_k< i \le t_k}\{i: \, c_i \nsim s_k \}-1$, 

$b_{k+1}=b_k$, 
$\m L_k:= \m L_{k-1} \cup \{\ov{c_is_k} : t_{k+1}\le i \le t_k\}$. 
Go to {\bf Step 1}.

Observe that, in this case, we have that $b_k \le t_{k+1} < t_k$,  $c_{t_{k+1}} \prec s_k$ and, for $j=t_{k+1}+1, \dots, t_{k}$, $s_k \nsim c_j$.

\end{enumerate}

Assume that the process stops after $K$ iterations. 

For any $k < K$, in the $k^{th}$ iteration, exactly one line added to $\mc L_{k}$ is induced by two comparable points.  We call this line $l_k$. Thus, there are $K-1$ such lines, namely $l_1, \dots, l_{K-1}$. Notice that $l_k$ is either $\ov{c_{b_{k+1}}s_k}$ or $\ov{c_{t_{k+1}}s_k}$. 
If $l_k=\ov{c_{b_{k+1}}s_k}$, since $s_k \prec c_{b_{k+1}}$, we have  $\{c_{b_{k+1}}, c_{b_{k+2}}, \dots c_{b_K}, c_{t_K}, c_{t_{K-1}}, \dots, c_{t_1} \} \subseteq \ov{c_{b_{k+1}}s_k}$,  and since $c_{b_k} \nsim s_k$, we have $c_{b_k} \notin   \ov{c_{b_{k+1}}s_k}$. 
Similarly, if $l_k=\ov{c_{t_{k+1}}s_k}$ we have $\{c_{b_1}, \dots, c_{b_K}, c_{t_K}, \dots, c_{t_{k+1}}\} \subseteq \ov{c_{t_{k+1}}s_k}$ and $c_{t_k} \notin \ov{c_{t_{k+1}}s_k}$. 
Observe now that the line $\ov{c_1c_H}$, that is added at the $K^{th}$ iteration, contains all points in $C$.
This implies that the lines  $l_1, \dots, l_{K-1}, \ov{c_1c_H}$ are pairwise distinct. Thus, the process finds $K$ pairwise distinct lines which are induced by comparable points. Moreover, since all the lines in $\mc L_0$ are induced by incomparable points, none of these $K$ lines belong to $\mc L_0$. 

The rest of the lines found by the process are induced by incomparable points. Hence, it remains to prove that $H-K$ of them are pairwise distinct and don't belong to $\mc L_0$.

Let $k<K$. 
We claim that $\mc L_k$ contains at least $b_{k+1}-b_k + t_k-t_{k+1}-1$ (new) lines that are not in $\mc L_{k-1}$. 
Assume first that, in the $k^{th}$ iteration,  lines are added at Step 2b (so $t_k-t_{k+1}=0$). So $b_{k+1} - b_{k}$ lines induced by two incomparable points are added, namely $\ov{c_{b_k}s_k}, \dots, \ov{c_{b_{k+1}-1}s_k}$. 
At most one of these lines belongs to $\mc L_0$ and none of them belongs to $\mc L_{k-1} \setminus \mc L_0$ because lines induced by incomparable points that are added in previous iterations of the process,  involve  points of $C$ either strictly below $b_k$ or  strictly above $t_k \ge b_{k+1}$. 
Hence, at least $b_{k+1} - b_{k}-1$ new  lines induced by incomparable points are added at Step 2b. A symmetric argument proves that, in the case where the lines are added at Step 2c (so $b_{k+1}-b_k=0$), we have added $t_k-t_{k+1}-1$ new lines induced by incomparable points.

So, after $K-1$ iterations, the number of  lines induced by incomparable points, in $\mc L_{K-1} \sm \mc L_0$ is 
\begin{displaymath}
\sum_{k=1}^{K-1} (b_{k+1}-b_{k} + t_{k}-t_{k+1} -1)= t_1-b_1 -( t_K-b_K) -(K-1)
= H-K-(t_K-b_K).
\end{displaymath}

Hence, it remains to show that $t_K-b_K$ new distinct lines induced by incomparable points are added at the $K^{th}$ iteration. 
In the case where $b_K=t_K$ we are done, so we may assume that $b_K<t_K$ and the process terminates at Step 2a. 
So, the lines $\ov{c_{b_K}s_k}, \,  \ov{c_{b_K+1}s_k}, \dots, \, \ov{c_{t_K}s_k}$ are added.
At most one of these lines belong to $\mc L_0$ and none of them belong to $\mc L_{K-1} \setminus \mc L_0$ since lines induced by incomparable points added at iterations $1, \dots, K-1$  involve  points of $C$ either strictly below $b_K$ or  strictly above $t_K$. 
It follows that $t_K-b_K$ new lines are added.

\section{Lines in Graphs}\label{graph}

We first need two easy observations about lines in a graph $G=(V,E)$. 
A vertex $x$ in a graph is \emph{universal} if it is adjacent to all vertices in $V \sm x$.  

\begin{enumerate}
\item If $ab \notin E$, then $\ov{ab}=\{a,b\}$,
\item A line $\ov{ab}$ is universal if and only if both $a$ and $b$ are universal.
\end{enumerate}
We are now ready to prove our generalization to the graph case.

\subsection{Proof of Theorem \ref{mainGraph}}

We will use induction on $n$ on the full statement of the theorem. First, we show that the theorem holds when $n=4$ . If there is no triangle in our graph, then every pair of vertices induces a distinct line, giving us $6$ lines. If there are two different triangles in our graph, then there exist $2$ vertices $p, q$, that belong to both triangles, and the line $\ov{pq}$ is universal, a contradiction. Therefore, we have exactly one triangle, and it is easy to see that in this case we have exactly $4$ lines in our graph and the extremal graphs are exactly as desired.

Let $G=(V, E)$ be a graph on $n \ge 5$ vertices having no universal lines, and assume the statement holds for smaller $n$.  

Let $V_1 \subseteq V$ be the set of those points $x$ such that $G \setminus \{x\}$  has a universal line, and set $V_2 = V \setminus V_1$. Assume first that $V_2= \emptyset$. So $V_1=V$ and, thus, for any $x \in V$, $V \sm \{x\}$, induces a universal line. Since $G$ has no universal lines, $V \sm \{x\}$ is  a line of $G$ for any $x \in V$. Thus, $G$ induces $n$ distinct lines of size $n-1$. 
Moreover, since it has no universal lines, $G$ has at least two non-adjacent vertices, providing a line of size two. Thus, if $V_2 = \emptyset$, we have that $G$ induces at least $n+1$ lines. 

So we may assume from now on that $V_2 \neq \emptyset$. We will distinguish between two cases:

\begin{case}
There exists a point $x$ in $V_2$ that is not universal.
\end{case}

Let $y$ be a vertex non-adjacent to $x$. 
Since $G \sm \{x\}$ has no universal lines, by induction, $G \sm \{x\}$ induces at least $n-1$ distinct lines. If $\ell$ is a line of $G \setminus \{x\}$, then either $\ell$ or $\ell \cup \{x\}$ is a line of $G$. It follows that these lines are all distinct in $G$. Moreover, if they contain $x$, then they have at least $3$ vertices and so they are all distinct from $\ov{xy}=\{x,y\}$. Hence, $G$ has at least $n$ distinct lines.

Now, assume that $G$ induces exactly $n$ distinct lines. Then, $G \setminus \{x\}$ must contain exactly $n-1$ lines and so by induction, $G \setminus \{x\}$ consists of a clique $K$ on $n-2$ vertices, $x_1, x_2, \dots, x_{n-2}$, and a vertex $z$ which has at most one neighbor in $K$. Notice that the set of lines of $G \sm \{x\}$ is $ \mathcal{L}_{G \setminus \{x\}}:= \{\{x_1, \dots, x_{n-2}\}, \{z,x_1\}, \dots, \{z,x_{n-2}\}\}$, giving us $n-1$ distinct lines of $G$, namely, $\mathcal{L} := \{\ell \text{ or } \ell \cup \{x\} \mid \ell \in \mathcal{L}_{G \setminus \{x\}} \}$. 

We claim that $x$ is adjacent to all vertices of $V \sm \{x,y\}$ because otherwise there exists a vertex $y'$ in $V \sm \{x,y\}$ such that $\ov{xy'}=\{x, y'\}$ is a line of $G$. Since $\ov{xy}=\{x,y\}$ is also a line of $G$, and $\ov{xy}, \ov{xy'} \not \in \mathcal{L}$, $G$ induces at least $n+1$ distinct lines contradicting our assumption.

Assume that $y \in K$, and let $z'$ be the unique neighbor of $z$ in $K$. Consider a vertex $w$ in $K \sm \{y, z'\}$ (such a vertex exists because $n \ge 5$). Since $y \not \in \ov{xw}$ and $z \not \in \ov{xw}$, we have $\ov{xw} \not \in \mathcal{L}$. Of course, $\ov{xy} \not \in \mathcal{L}$ is a line of $G$ like before. Thus, $G$ induces at least $n+1$ distinct lines again. Therefore, $y=z$, and $K \cup \{x\}$ is a clique of $G$ as desired. 

\begin{case}
All points of $V_2$ are universal.
\end{case}

Since $G$ has no universal line, if follows that $V_2$ contains exactly one vertex, say $x$. So $V_1 = V \sm \{x\}$ and, thus, for any $u \in V_1$, $V \sm \{u\}$, is a line of $G$. This yields $n-1$ lines of size $n-1$. 
Moreover, it is easy to see that, since $G$ has no universal lines, it must contain at least two pairs of non-adjacent vertices, providing us with two more distinct lines of size two. Hence, $G$ has at least $n+1$ distinct lines.

\end{document}